\documentclass[11pt]{amsart}

\usepackage{amssymb, amsthm, amsmath}
\usepackage{amsfonts}
\usepackage{graphicx, comment}
\usepackage[small]{caption}
\usepackage{subcaption}
\usepackage{epsfig}
\usepackage{tikz, float}
\usepackage [english]{babel}
\usepackage{tikz}
\usetikzlibrary{calc,decorations.pathreplacing}

\usepackage{complexity}

\newcommand{\later}[1]{}
\newcommand{\old}[1]{}

\usepackage[utf8]{inputenc}
\usepackage{fullpage}
\usepackage{framed}

\usepackage{enumerate}
\usepackage{url}
\usepackage[breaklinks]{hyperref}
\hypersetup{
	colorlinks = true, 
	urlcolor = cyan, 
	linkcolor = teal, 
	citecolor = cyan 
}

\newtheorem{theorem}{Theorem}[section]

\newtheorem{definition}[theorem]{Definition}

\newtheorem{proposition}[theorem]{Proposition}
\newtheorem{corollary}[theorem]{Corollary}

\newcommand{\ie}{{i.e.}}

\newcommand{\Z}{\mathbb{Z}} 
\newcommand{\RR}{\mathbb{R}} 

\usepackage{tikz}
\tikzstyle{vtx} = [circle, fill, inner sep=0.7]

\title{Exponential improvements in Rado's covering problem}

\thanks{The first named author would like to acknowledge the financial support
  of the Istituto Nazionale di Alta Matematica ``F. Severi".
}

\author{Gian Maria Dall'Ara}
\address{Gian Maria Dall'Ara \newline
  Istituto Nazionale di Alta Matematica ``F. Severi''
  and Scuola Normale Superiore, Pisa, Italy}
\email{dallara@altamatematica.it}

\author{Adrian Dumitrescu}
\address{Adrian Dumitrescu \newline
  Algoresearch L.L.C., Milwaukee, WI, USA \newline
  Alfr\'ed R\'enyi Institute of Mathematics, Budapest, Hungary \newline
  Research Institute of the University of Bucharest, Romania}
\email{ad.dumitrescu@algoresearch.org}

\date{\today}

\begin{document}
	
\begin{abstract}
Let $B^d$ denote the $d$-dimensional Euclidean ball of unit radius. What is the largest constant $f(B^d) \in [0,1]$ with the property that every finite collection $\mathcal{C}$ of unit balls in $\RR^d$ admits a disjoint sub-collection $\mathcal{S}$ occupying at least a fraction $f(B^d)$ of the volume of $\mathcal{C}$? This problem was first raised by T.~Radó in 1928, for axis-parallel squares in the plane; the author was motivated by a classical covering lemma in real analysis due to Vitali. The case of Euclidean balls was first considered by R. Rado in 1949.

Until last year the best known estimates on $f(B^d)$ for unit balls where very far apart: \[ (1+\epsilon_d) 3^{-d} \leq f(B^d) \leq 2^{-d},  \] where $0<\epsilon_d=o_{d\rightarrow \infty}(1)$. Recently, the authors of this note observed that an exponential improvement on the upper bound follows from the Kabatiansky--Levenshtein spherical code bound, while the lower bound was improved by a linear factor by C.~Xie and G.~Ge. The current best estimates for large $d$ are \[ c \cdot d \cdot 3^{-d} \leq f(B^d) \leq 2.447^{-d}, \] where $c>0$ is an absolute constant. 

Here we offer the first exponential improvement of the lower bound in almost 80 years, which narrows the gap to:
        \[ 2.910^{-d} \leq f(B^d) \leq 2.447^{-d}. \] Our method is constructive and yields a polynomial time algorithm for finding a disjoint sub-collection realizing the estimate. Moreover the same technique gives similar exponentially improved lower bounds for all symmetric convex bodies satisfying a uniform convexity assumption, e.g., $\ell^p$-balls for all $p\in (1,\infty)$.  
	
	\medskip \noindent
	\textbf{\small Keywords}: Vitali covering lemma, greedy algorithm, Rado's covering problem,
	sphere packing.

\end{abstract} 

\maketitle

\section{Introduction} \label{sec:intro}

The ``covering problem'' considered here has been introduced 
by R.~Rado in a 1949 paper~\cite{R49} and later further investigated by him and other authors.
See~\cite{DD26} for a recent survey on this topic. 

For a measurable subset $E$ of $\RR^d$, denote by $|E|$ its Lebesgue measure.
If $K$ is a symmetric convex body in $\RR^d$, let $F(K)$ denote the largest constant $c\geq 0$
for which the following statement holds true: any finite collection $\mathcal{C}$ of
\emph{homothetic copies} of $K$ admits a disjoint sub-collection $\mathcal{S}$ such that  
\[
|\cup_{K'\in \mathcal{S}}K'|\geq c|\cup_{K'\in \mathcal{C}}K'|. 
\]
Similarly, we define a constant $f(K)$ by requiring that the above statement holds for collections
$\mathcal{C}$ consisting of \emph{translates} of $K$. The problem is to determine, or more
realistically estimate, the value of the constants $F(K)$ and $f(K)$ for various choices of $K$. As
remarked in our earlier writing~\cite{DD26}, the problem of estimating $f(B^d)$ (where $B^d$ is the
unit Euclidean ball in $\mathbb{R}^d$) is a sort of ``relative'' sphere packing problem, where one
is constrained to choose the balls from a prescribed and completely arbitrary collection.  

The basic estimates 
\begin{equation}\label{eq:basic_estimates} 3^{-d} \leq F(K)\leq f(K) \leq 2^{-d}\qquad \forall K
  \subseteq \mathbb{R}^d,\end{equation} 
have been the best known bounds for a long time, except for minor improvements in the lower bound
(of the form $(1+\epsilon_d)3^{-d}$ with $\epsilon_d=o(1)$ in the asymptotic regime $d\rightarrow
\infty$). Recently, there has been progress in the case $K=Q^d$ (where $Q^d$ is the unit
axis-parallel cube) and $K=B^d$. In \cite{D25}, the first author closed the exponential gap for
high-dimensional cubes, showing that
\[
F(Q^d)\geq (e+o(1))^{-1}(d\log d)^{-1}2^{-d}. 
\]
This inequality was derived from the general comparison
\begin{equation}\label{eq:comparison}
F(K)\geq (e+o(1))^{-1}(d\log d)^{-1}f(K), 
\end{equation}
valid for any symmetric convex body, and the identity $f(Q^d)=2^{-d}$, which had been known since
the 1940s (see \cite{DD26} for details on the history). Concerning the unit ball, as shown in~\cite{DD26},
an application of the Kabatianksy--Levenshtein bound for spherical codes yields the
exponentially stronger upper bound
\[
f(B^d)=O(2.447^{-d}).
\]
From the other direction, Xie and Ge~\cite{XG26} used the ``weighted hardcore model'' to establish
that
\[
f(B^d)\geq c\cdot d \cdot 3^{-d}, 
\]
where $c>0$ is an absolute constant. In this paper, we offer the first exponential improvement of
this lower bound.

\begin{theorem} \label{thm:exp_ball} Let $\rho_2:= \sqrt{5 + 2 \sqrt3} = 2.9093\ldots<3$. Then 
\begin{equation}
f(B^d)\geq (2^d+\rho_2^d)^{-1}\label{eq:exp_ball}
\end{equation}
\end{theorem}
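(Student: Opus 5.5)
The plan is to analyse a \emph{modified greedy algorithm} with a volume-charging argument. The classical greedy proof of $f(B^d)\ge 3^{-d}$ selects a ball $B(c_0,1)$, discards every ball $B(c,1)$ with $|c-c_0|\le 2$, and notes that the discarded balls lie in $B(c_0,3)$. This loses because the whole $3$-ball is charged to one selected unit ball. I would instead choose $c_0$ so that the other centers in its $2$-neighbourhood are geometrically constrained relative to $c_0$. Then I would show that the union of the discarded balls has volume at most $(2^d+\rho_2^d)|B^d|$. Iterating on the remaining collection, and summing over the rounds (the discarded unions cover $\cup\mathcal C$), gives \eqref{eq:exp_ball}. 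Each step is a finite geometric test on the centers, so the procedure is polynomial time.

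\textbf{Step 1 (choice of $c_0$).} The first idea is to take $c_0$ extremal: a vertex of the convex hull of the remaining centers, or the center maximizing a fixed linear functional $\langle\cdot,u\rangle$. Then every other center $c$ satisfies $\langle c-c_0,u\rangle\le 0$.

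A half-space constraint alone does not improve the exponent: unit balls centered in a half-ball of radius $2$ still fill about half of $B(c_0,3)$. So the rule must be stronger. The number $\rho_2^2=5+2\sqrt3=2^2+1^2+2\cdot2\cdot1\cdot\cos(\pi/6)$ is exactly $|a+b|^2$ for $|a|=2$, $|b|=1$ and $\angle(a,b)=\pi/6$. This suggests the target: a point $y=c+v$ (with $|v|\le 1$) of a discarded ball can be far from $c_0$ only when $c-c_0$ and $v$ make an angle of at least $\pi/6$.

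I would therefore choose $c_0$ to be locally extremal in a \emph{cone} sense. Concretely, take $c_0$ to maximize $|c-p|$ over the remaining centers, for a suitable reference point $p$, or repeatedly pass to a center witnessing a violation of the angle condition. The goal is that every discarded point $y$ with $|y-c_0|>2$ lies within distance $1$ of a center $c$ for which $\angle(c-c_0,\,y-c)\ge\pi/6$. If a discarded point violated this, I would replace $c_0$ by that $c$ and argue by a monotone potential, such as $\langle c,u\rangle$ or $|c-p|$, that the replacement process terminates.

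\textbf{Step 2 (the volume bound).} I would split the discarded union $U$ as
\[
U\subseteq B(c_0,2)\;\cup\;\bigl(U\setminus B(c_0,2)\bigr).
\]
The first piece has volume $2^d|B^d|$. For the second piece, the angle constraint from Step 1 and the law of cosines give $|y-c_0|^2\le 4+1+4\cos(\pi/6)=\rho_2^2$. So the second piece lies in $B(c_0,\rho_2)$ and has volume at most $\rho_2^d|B^d|$. Adding the two pieces gives $|U|\le(2^d+\rho_2^d)|B^d|$ per selected ball, which is the bound needed.

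\textbf{Main obstacle.} The hard part is Step 1: designing a selection rule that provably yields the $\pi/6$ angle condition for \emph{all} far discarded points, while keeping both the disjointness of the selected balls and the covering of $\cup\mathcal C$. The constant $\cos(\pi/6)$ suggests a comparison between two centers at mutual distance $\le 2$ and a third at distance $\le 1$ from the point, so that the configuration reduces to a planar one. I would first work out that planar three-point configuration explicitly, with $c_0$ extremal, to see why $\pi/6$ is the right threshold, and only then lift the argument to $\RR^d$.
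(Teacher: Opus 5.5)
Your Step 1 is not just hard; it cannot be achieved by any selection rule. Take two unit balls whose centers are at distance exactly $2$. Whichever center $c_0$ you select, the point $y=c+\tfrac12(c-c_0)$ of the other ball is at distance $3>\rho_2$ from $c_0$. The only center within distance $1$ of $y$ is $c$, and $\angle(c-c_0,y-c)=0$. So neither choice of $c_0$ has your angle property. Step 2 is also suspicious: if $U\setminus B(c_0,2)\subseteq B(c_0,\rho_2)$ held, then $U\subseteq B(c_0,\rho_2)$ and the term $2^d$ would be superfluous. That suggests your split is not where $2^d+\rho_2^d$ comes from.

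Worse, the per-round volume bound $|U|\le(2^d+\rho_2^d)|B^d|$ fails for \emph{every} choice of $c_0$. Let the centers be a fine grid in $B(0,R)$ with $R$ large. For any $c_0$, the discarded centers include, up to negligible error, all grid points of a half-ball of radius $2$ centred at $c_0$ and pointing into $B(0,R)$. Hence $U$ essentially contains a half-ball of radius $3$, and $|U|\ge(\tfrac12-o(1))3^d|B^d|$. This exceeds $(2^d+\rho_2^d)|B^d|$ for all large $d$, since $\rho_2<3$. So "select one ball, discard its whole $2$-neighbourhood, charge the discarded union to that ball" admits no per-round bound better than about $\tfrac12 3^d|B^d|$, and cannot give an exponential improvement.

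The missing idea is that the discarded balls must be charged to \emph{several} selected balls. That requires a global property of the selected family, not local extremality of each selected center. The paper takes $Y$ to be a maximum independent set, or just one that is maximal and stable under one-for-two exchanges, which local search finds in polynomial time. It then splits the other centers $x$ by how many selected balls $B(y,1)$, $y\in Y$, meet $B(x,1)$.

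\begin{itemize}
\item \emph{At least two.} Suppose $B(x,1)$ meets two disjoint selected balls $B(y_1,1)$ and $B(y_2,1)$. Averaging $|z-y_1|^2$ and $|z-y_2|^2$ with the parallelogram identity gives $\min_i|z-y_i|^2\le 5+2\sqrt3$ for every $z\in B(x,1)$. Hence $B(x,1)\subseteq B(y_1,\rho_2)\cup B(y_2,\rho_2)$. Equality occurs when $x,y_1,y_2$ form an equilateral triangle of side $2$; this, not an extremality condition on $c_0$, is the true source of your angle $\pi/6$.
\item \emph{Exactly one.} If $B(x,1)$ meets only $B(y,1)$, then all such $x$ are pairwise within distance $2$. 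Otherwise, replacing $y$ by two of them would enlarge $Y$. So their balls form a set of diameter at most $4$, whose volume is at most $2^d|B^d|$ by the isodiametric inequality.
\end{itemize}

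Each $y\in Y$ is therefore charged at most $(2^d+\rho_2^d)|B^d|$, which gives the theorem.
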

Combining this with \eqref{eq:comparison}, one immediately obtains a corresponding exponential
improvement in the lower bound for $F(B^d)$. As explained below our method applies to a variety of
symmetric convex bodies, yielding in particular analogous exponentially improved lower bounds for
$\ell^p$ balls for all $p\in (1,\infty)$. 

\subsection{Statement on AI usage}
The authors explored a strategy to improve the lower bound on $f(B^d)$ based on the decomposition of Section \ref{sec:decomposition} below. Partial results in this direction (insufficient to close the argument) were recorded in Section 7 of their preprint~\cite{DD26}. An LLM, prompted by the first author (on July 12th 2026), proved Theorem \ref{thm:exp_ball}: it improved the ``independence number 2'' analysis of \cite{DD26}, observing that the set inclusion $B\subseteq \sqrt{5+2\sqrt{3}}B_1\cup
\sqrt{5+2\sqrt{3}}B_2$ is valid for any unit ball $B$ intersecting two disjoint unit balls $B_1$ and
$B_2$. This simple, yet key, observation has been developed by the authors into the present
paper. No LLM was involved in the writing of this manuscript.

\section{A general theorem}\label{sec:general}

\subsection{Bodies and norms} A symmetric convex body in $\mathbb{R}^d$ is the unit ball
\[
K=\{x\in \mathbb{R}^d\colon\, \lVert x\rVert_K\leq 1\}
\]
of a unique norm $\lVert \cdot \rVert_K:\mathbb{R}^d\rightarrow [0, \infty)$; homothetic copies of
  $K$ can then be thought of as metric balls in the normed space $(\mathbb{R}^d, \lVert\cdot,
  \rVert_K)$. The correspondence bodies--norms will be used freely below to rephrase statements
  about collections of homothetic or translated copies of $K$ as geometric-combinatorial statements
  about collections of points on a finite-dimensional normed space. E.g., the constant $f(K)$ is the
  largest $c\geq 0$ with the following property: every finite set $X\subseteq \mathbb{R}^d$ has a
  subset $Y\subseteq X$ such that
\begin{equation}\label{eq:2-sep}
\lVert y-y'\rVert_K>2\qquad \forall y,y'\in Y
\end{equation} and \begin{equation}\label{eq:density}
|\cup_{y\in Y}B(y,1)|\geq c|\cup_{x\in X}B(x,1)|, 
\end{equation}
where $B(x,r)$ is the ball of center $x$ and radius $r$ with respect to the norm $\lVert \cdot\rVert_K$.
In graph-theoretic terminology, condition \eqref{eq:2-sep} says that $Y$ is an independent set in
the
``proximity graph'' with vertex set $X$ and adjacency relation
\[x\sim x'\quad \Longleftrightarrow \quad \lVert x-x'\rVert_K\leq 2, \]
which may be identified with the intersection graph of the collection of unit balls specified by the
set of centers $X$. In what follows, this graph-theoretic terminology is used without
comment. Notice that the density bound \eqref{eq:density} shows that the question does not reduce to
a purely combinatorial problem on the proximity graph (hence the need for a ``weighted'' analysis,
as in~\cite{XG26}).  

\subsection{Maximum independent sets} In view of the above, we now fix a norm $\lVert \cdot \rVert$
in $\mathbb{R}^d$ and work with the formulation of the problem just discussed. Let
$B(x,r)=\{y\in \mathbb{R}^d\colon\, \lVert y-x\rVert\leq r\}$ and $K=B(0,1)$. We begin by recalling
the proof of the easy lower bound $f(K)\geq 3^{-d}$.  

Given a finite set $X\subseteq \mathbb{R}^d$, let $Y$ be \emph{any maximal (w.r.t.~inclusion)
  independent set} of the proximity graph associated to $X$. This means that every $x\in X\setminus
Y$ is at distance at most $2$ from some element of $Y$. Hence,
\[\cup_{x\in X}B(x,1)\subseteq \cup_{y\in Y}B(y,3).\] 
This statement may be viewed as the equal-radius case of the ``Vitali covering lemma'';
see, e.g.,~\cite[Lemma 7.3]{R87}. Because the unit balls centered at Y are disjoint and dilating a
set by a factor $\lambda$ scales its volume by a factor at most $\lambda^d$,    
\begin{equation}\label{eq:vitali_bound} \Big| \bigcup_{x \in X} B(x,1) \Big| \leq 3^d\Big|
  \bigcup_{y \in Y} B(y,1) \Big|.
\end{equation} 
This is the desired $3^{-d}$ lower bound. Notice that we proved more than needed, namely that
\emph{every maximal independent set $Y$ satisfies \eqref{eq:vitali_bound}}. It is important to
notice that the latter conclusion cannot be sharpened: e.g., if $X=(\epsilon \Z)^d\cap K$, then
$\bigcup_{x \in X} B(x,1)$ approximately fills $B(0,3)$ and the independent sets are just the
singletons, so $\frac{\Big| \bigcup_{y \in Y} B(y,1)\Big |}{\Big| \bigcup_{x \in X} B(x,1)\Big
  |}\rightarrow 3^{-d}$ asymptotically as $\epsilon$ tends to zero. Thus, any improvement of
\eqref{eq:vitali_bound} requires a more careful choice of $Y$.  

There is an obvious best choice for $Y$, namely \emph{any independent set of maximal
  cardinality}. If $\alpha$ is such cardinality, then by disjointness $\Big| \bigcup_{y \in Y}
B(y,1) \Big|=\alpha|K|$. The statement that $f(K)\geq c'$ is \emph{equivalent} to the inequality
$\alpha|K|\geq c' \Big | \cup_{x\in X}B(x,1)\Big |$. Thus, in order to prove our main theorem we
need to exploit some property of maximum independent sets (in the sense of cardinality) and not
merely their maximality with respect to inclusion. 

\subsection{A decomposition of the set of centers}\label{sec:decomposition} Let $X\subseteq \mathbb{R}^d$ be finite and
$Y\subseteq X$ a \emph{maximum} independent set. Denote by $\alpha$ its cardinality. In particular, $Y$ is
maximal and thus any $x\in X$ is at distance at most $2$ from some element of $Y$.  For $x \in X$,
define 
\[ N(x):= \{ y \in Y \ \colon \ \lVert x-y\rVert \leq 2\}. \]
Thus $N(x) \neq \emptyset$, for every $x \in X$. Partition $X$ into
\[ X_1:= \{ x \in X \ \colon \ \# N(x)=1\}, \qquad
X_{\geq 2}:= \{ x \in X \ \colon \ \# N(x) \geq 2\}.\]
We now treat the two sets of centers $X_1$ and $X_{\geq 2}$ in two different ways. We call the first
the \emph{singly-dominated centers}, and the second the \emph{multiply-dominated centers}. 

\subsection{Singly-dominated centers}
For $X_1$, we use the \emph{isodiametric inequality}, which states that, among all compact sets
$E\subseteq \mathbb{R}^d$ of diameter at most $r$, the ball of radius $\frac{r}{2}$ has maximum
volume. Notice that here diameter and balls are defined with respect to the fixed norm $\lVert \cdot
\rVert$, and the result holds for any norm; see, e.g., the one-line proof using the Brunn--Minkowski
inequality~\cite[p.~93]{BZ88}.  

Clearly, \[X_1=\cup_{y\in Y}P_y,\qquad P_y:= \{x \in X \ \colon \ N(x) =\{y\}\}.\] 
The key observation is that each $P_y$ is a clique in the proximity graph, that is, any two points
of $P_y$ are at distance at most $2$. Indeed, if there were 
$x,x' \in P_y$ such that $|x-x'|>2$, then $Y'=\left(Y \setminus \{y\}\right) \cup \{x,x'\}$
would be independent, since both $x$ and $x'$ are farther than $2$ from every element of
$Y \setminus \{y\}$, by the definition of $P_y$. The cardinality of $Y'$ would be $\alpha +1$, 
contradicting the maximality of $|Y|$. Consequently the set $\bigcup_{x \in P_y} B(x,1)$ has
diameter at most $4$ and, by the isodiametric inequality, its volume is at most $2^d|K|$.  
Hence
\begin{equation} \label{eq:singly_dominated}
	\big| \bigcup_{x \in X_1} B(x,1) \big| \leq \sum_{y \in Y} |\bigcup_{x \in P_y} B(x)| \leq \alpha 2^d |K|.
\end{equation}
This completes the analysis of the singly-dominated-centers.

\subsection{Multiply-dominated centers}

We introduce a definition tailored to the analysis of $X_{\geq 2}$. 

\begin{definition}\label{def:rho2}
Given a symmetric convex body $K$, let $\rho_2(K)$ be the smallest constant $r>0$ such that the
following holds: if $K_1$ and $K_2$ are translates of $K$ with disjoint interiors, and $K_3$ is a
further translate of $K$ intersecting both of them, then $K_3\subseteq rK_1 \cup rK_2$. Here $rK_i$
is the dilate of $K_i$ with respect to its center. 
\end{definition}
This is the $m=2$ case of a sequence of constants $\rho_m(K)$, whose definition is sketched in Section \ref{sec:concluding} below (see Remark (2)). 

It is easy to rephrase the definition in terms of the norm $\lVert \cdot \rVert=\lVert \cdot
\rVert_K$:
\begin{equation}\label{eq:rho2}
\rho_2(K)=\max_{(x,y_1,y_2)\in R} \min \{\lVert x-y_1\rVert, \lVert x-y_2\rVert\}, 
\end{equation}
where $R\subseteq (\mathbb{R}^d)^3$ is defined by the conditions
\begin{equation}\label{eq:rho2_inequalities}
\lVert x\rVert\leq 1, \quad 
\lVert y_1\rVert,  \lVert y_2\rVert\leq 2, \quad \lVert y_1-y_2\rVert\geq 2. 
\end{equation}

It is now straightforward to plug the newly introduced constant $\rho_2(K)$ into our argument. If
$x\in X_{\geq 2}$, then there are at least two unit balls (a.k.a.~translates of $K$) centered at
points of $Y$ that intersect the unit ball centered at $x$. By definition of $\rho_2(K)$,
\[
 \bigcup_{x \in X_{\geq 2}} B(x,1) \subseteq \bigcup_{y\in Y} B(y, \rho_2(K))
\] and \begin{equation}\label{eq:multiply_dominated}
	\big| \bigcup_{x \in X_{\geq2}} B(x,1) \big| \leq \alpha \rho_2(K)^d |K|.
\end{equation}

\subsection{General theorem}

Combining \eqref{eq:singly_dominated}, \eqref{eq:multiply_dominated},
and \eqref{eq:comparison}, we obtain the following theorem. 

\begin{theorem}\label{thm:abstract}
	Let $K$ be any symmetric convex body in $\mathbb{R}^d$. Then \[f(K)\geq
        (2^d+\rho_2(K)^d)^{-1}\] and
        \[
	F(K)\geq (e+o(1))^{-1}(d\log d)^{-1}(2^d+\rho_2(K)^d)^{-1}.
	\] 
	\end{theorem}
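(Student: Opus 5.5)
The plan is to fix an arbitrary finite set $X\subseteq\mathbb{R}^d$, choose a \emph{maximum} independent set $Y$ of the proximity graph with $\alpha=\#Y$, and split $X=X_1\cup X_{\geq 2}$ as in Section~\ref{sec:decomposition}. Since $Y$ is in particular maximal, every $x\in X$ has $N(x)\neq\emptyset$, so this is a genuine partition. Subadditivity of Lebesgue measure then gives
\[
\Big|\bigcup_{x\in X}B(x,1)\Big|\leq \Big|\bigcup_{x\in X_1}B(x,1)\Big|+\Big|\bigcup_{x\in X_{\geq 2}}B(x,1)\Big|.
\]
I will bound the first term by \eqref{eq:singly_dominated}, namely $\alpha 2^d|K|$, and the second by \eqref{eq:multiply_dominated}, namely $\alpha\rho_2(K)^d|K|$. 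The balls $B(y,1)$, $y\in Y$, are pairwise disjoint because $Y$ is independent, so their union has volume exactly $\alpha|K|$. Combining these,
\[
\Big|\bigcup_{x\in X}B(x,1)\Big|\le (2^d+\rho_2(K)^d)\,\Big|\bigcup_{y\in Y}B(y,1)\Big|.
\]
As $X$ was arbitrary, the norm reformulation \eqref{eq:2-sep}--\eqref{eq:density} of $f(K)$ yields $f(K)\geq(2^d+\rho_2(K)^d)^{-1}$.

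Before this step is complete, I need to check the inclusion behind \eqref{eq:multiply_dominated} against Definition~\ref{def:rho2}. If $x\in X_{\geq2}$, pick distinct $y_1,y_2\in N(x)$. Then $\lVert y_1-y_2\rVert>2$ by independence, so $B(y_1,1)$ and $B(y_2,1)$ are disjoint. Also $\lVert x-y_i\rVert\le 2$, so $B(x,1)$ meets both. Definition~\ref{def:rho2} therefore gives $B(x,1)\subseteq B(y_1,\rho_2(K))\cup B(y_2,\rho_2(K))$. Taking the union over $x\in X_{\geq 2}$ and bounding the measure of $\bigcup_{y\in Y}B(y,\rho_2(K))$ by $\alpha\rho_2(K)^d|K|$ gives \eqref{eq:multiply_dominated}. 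I should also confirm that the minimum in Definition~\ref{def:rho2} is attained, so that $\rho_2(K)$ itself, and not only every larger $r$, works. This follows from \eqref{eq:rho2}, which expresses $\rho_2(K)$ as the maximum of a continuous function over the compact set $R$.

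For the bound on $F(K)$, I will simply substitute the lower bound on $f(K)$ into the comparison inequality \eqref{eq:comparison}, which is assumed valid for every symmetric convex body.

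I expect no real obstacle, since the substantive work is already contained in \eqref{eq:singly_dominated} and \eqref{eq:multiply_dominated}. The only delicate point is the clique argument for $P_y$, which uses the \emph{maximum} rather than merely \emph{maximal} cardinality of $Y$; that argument has already been recorded above, so I take it as given.
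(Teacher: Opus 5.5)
Your proposal is correct and follows essentially the same route as the paper. You take a maximum independent set $Y$, split the centers into singly- and multiply-dominated ones, add \eqref{eq:singly_dominated} and \eqref{eq:multiply_dominated}, use that the balls $B(y,1)$, $y\in Y$, are disjoint with total volume $\alpha|K|$, and then apply \eqref{eq:comparison} to get the bound on $F(K)$. Your extra checks are sound details the paper leaves implicit: you derive the inclusion behind \eqref{eq:multiply_dominated} from Definition~\ref{def:rho2}, and you note by compactness in \eqref{eq:rho2} that $r=\rho_2(K)$ itself is admissible.
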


Of course, the theorem provides novel information on the constants $f(K)$ and $F(K)$ only for those
convex bodies for which we can establish that $\rho_2(K)<3$. This matter is treated in the next
section.

\section{The constant $\rho_2(K)$} \label{sec:proof}

\subsection{Computation in the Euclidean case} 
In order to complete the proof of Theorem \ref{thm:exp_ball}, we need to compute $\rho_2(B^d)$. It
is convenient to introduce a slightly more general quantity. For $\delta\leq 2$, let
\begin{equation}\label{eq:rho2_delta} 
\rho_2(K,\delta)=\max_{(x,y_1,y_2)\in R(\delta)} \min \{\lVert x-y_1\rVert_K, \lVert x-y_2\rVert_K\}, 
\end{equation}
where $R(\delta)\subseteq (\mathbb{R}^d)^3$ is defined by the
conditions
\begin{equation}\label{eq:rho2_delta_inequalities}
\lVert x\rVert_K\leq 1, \quad  
\lVert y_1\rVert_K,  \lVert y_2\rVert_K\leq 2, \quad \lVert y_1-y_2\rVert_K\geq 2\delta.  
\end{equation}
Thus, $\rho_2(K)=\rho_2(K,1)$. 

Let $\lVert x\rVert_2:=\left(\sum_{j=1}^dx_j^2\right)^\frac{1}{2}$ ($x\in \mathbb{R}^d$) be the
Euclidean norm. Recall that, for any two vectors $x,y \in \RR^d$, we have the \emph{parallelogram identity}. 

\begin{equation} \label{eq:parallelogram}
	\lVert x+y\rVert_2^2 + \lVert x-y\rVert_2^2 = 2 \lVert x\rVert _2^2 + 2 \lVert y\rVert_2^2.  
\end{equation}

\begin{proposition} \label{prp:rho2_euclidean} For all $d\geq 2$ and $\delta\leq 2$, we have
  \begin{equation}\label{eq:rho2_euclidean}
	\rho_2(B^d,\delta)=\sqrt{5+2\sqrt{4-\delta^2}}.
  \end{equation}
  In particular,
\[ \rho_2(B^d)=\sqrt{5+2\sqrt{3}}. \]
\end{proposition}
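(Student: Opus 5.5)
The plan is to prove the upper bound by averaging the two squared distances and applying the parallelogram identity \eqref{eq:parallelogram}, and then to exhibit an explicit planar configuration attaining it. Throughout, I take $0\leq\delta\leq 2$; for $\delta<0$ the constraint $\lVert y_1-y_2\rVert_2\geq 2\delta$ is vacuous, so the relevant parameter is $\max(\delta,0)$ and the argument is the same.

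\emph{Upper bound.} Fix $(x,y_1,y_2)\in R(\delta)$ and put $m=\frac{y_1+y_2}{2}$ and $D=\lVert y_1-y_2\rVert_2$, so that $D\geq 2\delta$. Since the minimum of two numbers is at most their mean,
\[
\min\{\lVert x-y_1\rVert_2^2,\lVert x-y_2\rVert_2^2\}\leq \tfrac12\left(\lVert x-y_1\rVert_2^2+\lVert x-y_2\rVert_2^2\right).
\]
Applying \eqref{eq:parallelogram} to the vectors $x-m\pm\frac{y_1-y_2}{2}$ shows that the right-hand side equals $\lVert x-m\rVert_2^2+\frac{D^2}{4}$. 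Applying \eqref{eq:parallelogram} a second time, to $y_1$ and $y_2$, gives
\[
\lVert m\rVert_2^2=\tfrac12\left(\lVert y_1\rVert_2^2+\lVert y_2\rVert_2^2\right)-\tfrac{D^2}{4}\leq 4-s, \qquad s:=\tfrac{D^2}{4}.
\]
Note that $s\in[\delta^2,4]$. By the triangle inequality, $\lVert x-m\rVert_2\leq 1+\sqrt{4-s}$. Combining these bounds,
\[
\min\{\cdot\}^2\leq \left(1+\sqrt{4-s}\right)^2+s=5+2\sqrt{4-s}\leq 5+2\sqrt{4-\delta^2}.
\]
The last inequality holds because the middle expression is decreasing in $s$ and $s\geq \delta^2$.

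\emph{Lower bound.} Because $d\geq 2$, we can work in a coordinate plane. Take $a=\sqrt{4-\delta^2}$ and set
\[
y_1=(a,\delta,0,\dots),\qquad y_2=(a,-\delta,0,\dots),\qquad x=(-1,0,\dots).
\]
Then $\lVert y_i\rVert_2=2$, $\lVert y_1-y_2\rVert_2=2\delta$ and $\lVert x\rVert_2=1$, so $(x,y_1,y_2)\in R(\delta)$. Both distances satisfy
\[
\lVert x-y_i\rVert_2^2=(1+a)^2+\delta^2=5+2\sqrt{4-\delta^2},
\]
so this configuration attains the upper bound. The case $\delta=1$ then gives $\rho_2(B^d)=\sqrt{5+2\sqrt3}$.

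The step I expect to be the main obstacle is the upper bound, specifically recognising that one should bound the minimum by the mean and then use the parallelogram identity twice. Done this way, the dependence on $D$ cancels except through the single term $\sqrt{4-s}$, which is monotone in $s$. A more direct case analysis of where $x$ lies relative to $y_1$ and $y_2$ would be considerably messier.
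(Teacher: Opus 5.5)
Your proof is correct and is essentially the paper's argument: bound the minimum by the mean of the squared distances, and use the parallelogram identity twice to reduce to $\lVert x-m\rVert_2^2+\frac{D^2}{4}$ with $\lVert m\rVert_2^2+\frac{D^2}{4}\leq 4$ (your $m=\frac{y_1+y_2}{2}$ is the paper's $p$). You then attain the bound with the same planar isosceles configuration, and your triangle-inequality bound on $\lVert x-m\rVert_2$ only replaces the paper's estimate $-2\langle x,p\rangle\leq 2\lVert p\rVert_2$. Restricting to $0\leq\delta\leq 2$ is the right reading of the statement: for $\delta<0$ one has $R(\delta)=R(0)$, so $\rho_2(B^d,\delta)=3$, and the paper's step $\lVert p\rVert_2\leq\sqrt{4-\delta^2}$ likewise tacitly uses $\delta\geq 0$.
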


\begin{proof}
Let $x,y_1,y_2$ as in \eqref{eq:rho2_inequalities}, where the norm is Euclidean. Put
	\[ p=\frac{y_1+y_2}{2},  \qquad q=\frac{y_1-y_2}{2}. \] 
	Then $\lVert q\rVert_2\geq \delta$ and, by the parallelogram identity~\eqref{eq:parallelogram}
	\[ \lVert p\rVert_2^2 + \lVert q\rVert_2^2 = \frac{\lVert a\rVert_2^2 + \lVert b\rVert_2^2}{2} \leq 4. \]
	It follows that $\lVert p\rVert\leq \sqrt{4-\delta^2}$. 
	
	We compute the average of the squared distances of $x$ to $y_1$ and $y_2$ respectively, 
	again using the parallelogram identity~\eqref{eq:parallelogram}:
	\begin{align*}
		\frac{\lVert x-y_1\rVert_2^2 + \lVert x-y_2\rVert_2^2}{2} &= \lVert x-p\rVert_2^2 +
                \lVert q\rVert_2^2 = \lVert x\rVert_2^2 + \lVert p\rVert_2^2 +  \lVert q\rVert_2^2 -
                2 \langle{x,p}\rangle\\  
		&\leq 1+4+ 2\lVert p\rVert_2 \leq 5+2\sqrt{4-\delta^2}
	\end{align*}
	
	At least one of $\lVert u-a\rVert_2^2, \lVert u-b\rVert_2^2$ is no larger than their average, so
	\[ \min\{\lVert u-a\rVert_2,\lVert u-b\rVert_2\} \leq  \sqrt{5 +2\sqrt{4-\delta^2}}. \]
The maximum is achieved with $x,y_1,y_2$ coplanar, as shown in Figure \ref{fig:rho2_euclidean},
where the origin is the tip of the isosceles triangle. This proves~\eqref{eq:rho2_euclidean}.  
\begin{figure}[ht]
	\centering
	\begin{tikzpicture}[scale=1.8,
		every node/.style={font=\small}]
		
		\def\deltaval{1.2}
		\pgfmathsetmacro{\hval}{sqrt(4-\deltaval*\deltaval)}
		
		\coordinate (y1) at (-\deltaval,0);
		\coordinate (m)  at (0,0);
		\coordinate (y2) at (\deltaval,0);
		\coordinate (v)  at (0,\hval);
		\coordinate (x)  at (0,\hval+1);
		
		\draw[thick] (y1) -- (v) -- (y2) -- cycle;
		
		\draw[dashed] (m) -- (v);
		
		\draw[thick] (v) -- (x);
		
		\draw[dashed] (y1) -- (x);
		
		\fill (y1) circle (1.2pt);
		\fill (y2) circle (1.2pt);
		\fill (m)  circle (1.2pt);
		\fill (v)  circle (1.2pt);
		\fill (x)  circle (1.2pt);
		
		\node[below left]  at (y1) {$y_1$};
		\node[below right] at (y2) {$y_2$};
		\node[above]       at (x)  {$x$};
		
		\node[below right]  at ($(y1)!0.50!(v)$) {$2$};
		
		\node[right] at ($(v)!0.5!(x)$) {$1$};
		
		\draw[
		decorate,
		decoration={brace,mirror,amplitude=4pt}
		] (y1) -- (m)
		node[midway,below=6pt] {$\delta$};

		\node[right=-1pt] at ($(m)!0.30!(v)$)
		{$\sqrt{4-\delta^2}$};
		
		\node[
		left,
		align=center,
		fill=white,
		inner sep=1.5pt
		] at ($(y1)!0.56!(x)$)
		{$\sqrt{\,5+2\sqrt{4-\delta^2}\,}$};
		
		\draw ($(m)+(-0.13,0)$)
		-- ($(m)+(-0.13,0.13)$)
		-- ($(m)+(0,0.13)$);
		
	\end{tikzpicture}
\caption{}
\label{fig:rho2_euclidean}
\end{figure}
\end{proof}

Combining Theorem \ref{thm:abstract} and Proposition \ref{prp:rho2_euclidean}, we have a proof of
Theorem \ref{thm:exp_ball}.  

\subsection{Comparison with modulus of convexity}

Clearly, $\rho_2(K)\leq 3$ for any convex
body. Moreover, $\rho_2(Q^d)=3$ for $d\geq 2$, where
\[  Q(x,r) =[x_1-r, x_1+r]\times \cdots \times [x_d-r, x_d+r],\qquad x=(x_1,\ldots, x_d)\in \RR^d, \, r>0, \] 
and $Q^d:=Q(0,1)$ is the $d$-dimensional cube, as may be seen by
taking $y_1=(-2,2,0')$, $y_2=(2,2,0')$, $x=(0,-1,0')$ in \eqref{eq:rho2}, \eqref{eq:rho2_inequalities} (here $0'=(0,\ldots, 0)\in \mathbb{R}^{d-2}$).
We now show that, as soon as the boundary of $K$ is not too flat, then $\rho_2(K)<3$. We recall the
notion of \emph{modulus of convexity} of a norm $\lVert \cdot \rVert=\lVert \cdot \rVert_K$. This is
the function  
\[ 
\delta_K(\epsilon)=\min\left\{1-\left\lVert \frac{x+y}{2}\right\rVert\colon\, \lVert x\rVert,\lVert y\rVert\leq 1,
\quad \lVert x-y\rVert \geq \epsilon\right\}, \qquad \epsilon>0. 
\] See \cite{LT79} for its role in the theory of Banach spaces. 

\begin{proposition}\label{prp:rho2_vs_convexity}
	\[
	\rho_2(K)\leq 3-2\delta_K(1/2). 
	\]
\end{proposition}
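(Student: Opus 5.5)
The plan is to bound $\lVert x-y_i\rVert$ directly for a well-chosen index $i$, by splitting $x-y_i$ into a midpoint-type term, which the modulus of convexity controls, plus a leftover term of norm at most $1$.

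Using the reformulation \eqref{eq:rho2}, \eqref{eq:rho2_inequalities}, fix $(x,y_1,y_2)\in R$. It suffices to find some $i\in\{1,2\}$ with $\lVert x-y_i\rVert\leq 3-2\delta_K(1/2)$. Set $v_i:=-y_i/2$. Then $\lVert v_i\rVert\leq 1$ and
\[\lVert v_1-v_2\rVert=\tfrac12\lVert y_1-y_2\rVert\geq 1.\]
We also have $\lVert x\rVert\leq 1$. The key decomposition is
\[
x-y_i \;=\; x+2v_i \;=\; 2\cdot\frac{x+v_i}{2} \;+\; v_i .
\]
The triangle inequality then gives
\[\lVert x-y_i\rVert\leq 2\left\lVert \tfrac{x+v_i}{2}\right\rVert+1.\]

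Next I choose the index. By the triangle inequality,
\[\lVert x-v_1\rVert+\lVert x-v_2\rVert\geq \lVert v_1-v_2\rVert\geq 1,\]
so for some $i$ we have $\lVert x-v_i\rVert\geq 1/2$. For that $i$, the pair $(x,v_i)$ is admissible in the definition of $\delta_K(1/2)$, since both vectors have norm at most $1$ and their distance is at least $1/2$. Hence
\[\left\lVert \tfrac{x+v_i}{2}\right\rVert\leq 1-\delta_K(1/2).\]
Plugging this into the previous bound yields
\[\lVert x-y_i\rVert\leq 2-2\delta_K(1/2)+1=3-2\delta_K(1/2),\]
which is exactly the claim. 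Taking the maximum over $R$ gives $\rho_2(K)\leq 3-2\delta_K(1/2)$.

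The only delicate point, and the step I would double-check, is the choice of splitting. Here the "midpoint" is taken between $x$ and the \emph{halved} reflected center $-y_i/2$, rather than between $y_1$ and $y_2$ as in the Euclidean proof. This choice is what turns the separation $\lVert y_1-y_2\rVert\geq 2$ into a separation $\geq 1/2$ between $x$ and one of the $v_i$, and that is where the argument $\varepsilon=1/2$ comes from. Since the definition of $\delta_K(\epsilon)$ uses the condition $\lVert x-y\rVert\geq\epsilon$, no monotonicity or continuity property of $\delta_K$ is needed.
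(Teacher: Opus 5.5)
Your proof is correct and is essentially the paper's own argument. The paper selects the index via $\lVert 2x+y_1\rVert+\lVert 2x+y_2\rVert\geq \lVert y_1-y_2\rVert\geq 2$, which is your inequality on $\lVert x-v_i\rVert$ scaled by $2$; it then applies the modulus of convexity to the pair $x,\,-y_i/2$ and concludes with $\lVert x-y_i\rVert\leq \lVert x-y_i/2\rVert+1\leq 3-2\delta_K(1/2)$.
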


\begin{proof}
	Let $x,y_1,y_2\in \mathbb{R}^d$ be such that $\lVert x\rVert\leq 1$, $\lVert y_1\rVert,
        \lVert y_2\rVert\leq 2$ and $\lVert y_1-y_2\rVert\geq 2$. We have to show that
        \[
	\min\{\lVert x-y_1\rVert, \lVert x-y_2\rVert\}\leq 3-2\delta_K(1/2). 
	\]
	By the triangle inequality, \[
	2\leq \lVert y_1-y_2\rVert\leq \lVert 2x+y_1\rVert+\lVert 2x+y_2\rVert.
	\]
	Thus either $\lVert 2x+y_1\rVert\geq 1$ or $\lVert 2x+y_2\rVert\geq 1$, and we may assume
        without loss of generality that the first option occurs. Since $x$ and $-y_1/2$ are in the
        unit ball and their distance is at least $1/2$, the definition of modulus of continuity
        yields
        \[ 
	\lVert x/2-y_1/4\rVert\leq 1-\delta_K(1/2).
	\]
	Thus, the triangle inequality gives
        \[
	\min\{\lVert x-y_1\rVert, \lVert x-y_2\rVert\} \leq \lVert x-y_1\rVert\leq
        \lVert x-y_1/2\rVert+1\leq3-2\delta_K(1/2), 
	\]
	as required. 
	\end{proof}

Theorem \ref{thm:abstract} yields the following. 

\begin{corollary}\label{cor:f(K)_vs_convexity}
Let $K$ be a symmetric convex body in $\mathbb{R}^d$. Then \[
f(K)\geq (2^d+(3-2\delta_K(1/2))^d)^{-1}
\]	 and \[
F(K)\geq (e+o(1))^{-1}(d\log d)^{-1}(2^d+(3-2\delta_K(1/2))^d)^{-1}. 
\] 
	\end{corollary}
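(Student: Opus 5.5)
The corollary follows directly by combining two earlier results: Theorem \ref{thm:abstract} and Proposition \ref{prp:rho2_vs_convexity}.

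\emph{Step 1.} Theorem \ref{thm:abstract} gives
\[
f(K)\geq (2^d+\rho_2(K)^d)^{-1}
\quad\text{and}\quad
F(K)\geq (e+o(1))^{-1}(d\log d)^{-1}(2^d+\rho_2(K)^d)^{-1}.
\]
Both right-hand sides are nonincreasing in $\rho_2(K)$. This holds because $t\mapsto t^d$ is increasing on $[0,\infty)$, and $t\mapsto (2^d+t^d)^{-1}$ is therefore decreasing there.

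\emph{Step 2.} Proposition \ref{prp:rho2_vs_convexity} gives the upper bound $\rho_2(K)\leq 3-2\delta_K(1/2)$.

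\emph{Step 3.} To raise $\rho_2(K)$ to the $d$-th power while preserving the inequality, I check that the bound is nonnegative, i.e.\ $3-2\delta_K(1/2)\geq 0$. By its definition, $\delta_K(\epsilon)\leq 1$, since it is the minimum of quantities $1-\lVert (x+y)/2\rVert\leq 1$. It is also attained for $\epsilon=1/2$: for example, take $x$ on the boundary and $y=-x$, which gives $\lVert x-y\rVert=2\geq 1/2$. Hence $3-2\delta_K(1/2)\geq 1>0$, and $\rho_2(K)>0$ by definition.

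\emph{Step 4.} Substituting the upper bound from Step 2 into the monotone expressions from Step 1 yields both inequalities of the corollary. In the bound for $F(K)$, the prefactor $(e+o(1))^{-1}(d\log d)^{-1}$ is unchanged by this substitution, so the inequality carries over directly.

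No step here is a genuine obstacle. The only point needing care is the positivity check in Step 3, which guarantees that taking $d$-th powers preserves the inequality.
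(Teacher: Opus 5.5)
Your proposal is correct and matches the paper, which obtains the corollary by substituting the bound $\rho_2(K)\leq 3-2\delta_K(1/2)$ of Proposition~\ref{prp:rho2_vs_convexity} into Theorem~\ref{thm:abstract}, using monotonicity of $t\mapsto (2^d+t^d)^{-1}$. Your positivity check in Step 3 is harmless but unnecessary, since $0\leq \rho_2(K)\leq 3-2\delta_K(1/2)$ already makes the bound nonnegative.
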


For the unit ball $B^d_p$ with respect to the $\ell^p$ norm \[
\lVert x\rVert_p:=\left(\sum_{j=1}^d |x_j|^p\right)^\frac{1}{p}, \qquad x\in \mathbb{R}^d, 
\] 
the modulus of convexity is \[
\delta_{B^d_p}(\epsilon) = \begin{cases} 0\qquad &(p=1)\\
	\text{unique $\delta\in [0,1]$ s.t. }(1-\delta+\epsilon/2)^p+|1-\delta-\epsilon/2|^p=2 \qquad &(1<p<2)\\
 	1-(1-(\epsilon/2)^p)^\frac{1}{p}\qquad &(2\leq p<\infty)\\
	0\qquad &(p=\infty)
	\end{cases}
\]
for any $d\geq 2$ (see~\cite{H56}). In particular, $\delta_{B^d_p}(1/2)=:\delta_p>0$ for all $p\in (1,\infty)$,
uniformly for $d\geq 2$. Thus, Corollary \ref{cor:f(K)_vs_convexity} gives an exponential improvement
on the basic lower bound.   

\begin{theorem}
Let $B^d_p$ be the $d$-dimensional $\ell^p$-ball, with $d\geq 2$. For every $p\in (1,\infty)$ there
exists $\delta_p>0$ (computable from the above formulas) such that
\[
f(B^d_p)\geq (2^d+\delta_p^d)^{-1}
\]	 and \[
F(B^d_p)\geq (e+o(1))^{-1}(d\log d)^{-1}(2^d+\delta_p^d)^{-1}=(2^d+\delta_p^d)^{-d+o(d)}. 
\] 
	\end{theorem}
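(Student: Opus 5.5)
The plan is to specialize Corollary~\ref{cor:f(K)_vs_convexity} to $K=B^d_p$. Everything then reduces to a lower bound on the modulus of convexity at $\epsilon=1/2$ that is strictly positive and independent of $d$.

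One point about reading the statement. The constant raised to the $d$-th power must be $3-2\delta_p$, where $\delta_p:=\delta_{B^d_p}(1/2)$; this is what Corollary~\ref{cor:f(K)_vs_convexity} produces. So I will prove
\[
f(B^d_p)\geq \bigl(2^d+(3-2\delta_p)^d\bigr)^{-1},
\]
with $\delta_p>0$, and hence $3-2\delta_p<3$. I will also use the fact that $t\mapsto (2^d+t^d)^{-1}$ is decreasing. Consequently any $d$-independent lower bound $\delta_{B^d_p}(1/2)\geq \delta_p>0$ suffices, and it makes no difference whether Hanner's formula is exact in every dimension or only a lower bound.

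\textbf{Step 1: positivity of $\delta_p$.} I take the formula quoted from \cite{H56} as given and check positivity in each range of $p$, uniformly in $d\geq 2$.

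For $2\leq p<\infty$ the formula gives
\[
\delta_p=1-(1-4^{-p})^{1/p}.
\]
This is positive because $0<1-4^{-p}<1$.

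For $1<p<2$, set $g(\delta)=(5/4-\delta)^p+|3/4-\delta|^p$ on $[0,1]$. This function is continuous.
\begin{itemize}
\item At $\delta=0$, strict convexity of $t\mapsto t^p$ gives $g(0)=(5/4)^p+(3/4)^p>2\cdot 1^p=2$.
\item At $\delta=1$, $g(1)=2\cdot 4^{-p}<2$.
\item On $[0,3/4]$ both terms are decreasing in $\delta$.
\item On $[3/4,1]$ we have $g\leq (1/2)^p+(1/4)^p<2$.
\end{itemize}
Hence the unique root of $g=2$ lies in $(0,3/4)$, and in particular $\delta_p>0$. Since the formula does not depend on $d$, neither does $\delta_p$.

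\textbf{Step 2: the bound on $f$.} Proposition~\ref{prp:rho2_vs_convexity} gives $\rho_2(B^d_p)\leq 3-2\delta_p<3$. Corollary~\ref{cor:f(K)_vs_convexity} (equivalently Theorem~\ref{thm:abstract}) then yields the bound on $f(B^d_p)$.

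\textbf{Step 3: the bound on $F$.} The same corollary gives the $F$ bound directly, through the comparison \eqref{eq:comparison}. For the asymptotic form, note that $2^d+(3-2\delta_p)^d\leq 2(3-2\delta_p)^d$. The prefactor $(e+o(1))^{-1}(d\log d)^{-1}$ equals $e^{-O(\log d)}=e^{-o(d)}$. The right-hand side is therefore $(3-2\delta_p)^{-d+o(d)}$, which still beats $3^{-d}$ exponentially.

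\textbf{Main obstacle.} The only real work is the case $1<p<2$ of Step~1, where $\delta_p$ is defined implicitly. The argument there is the sign change of $g$ combined with monotonicity.
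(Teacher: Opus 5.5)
Your proposal is correct and follows the paper's route: specialize Corollary~\ref{cor:f(K)_vs_convexity} to $B^d_p$, and use Hanner's formula to see that $\delta_{B^d_p}(1/2)>0$ independently of $d$. Your reading of the misprinted base $\delta_p$ as $3-2\delta_p$ is the intended one, and your sign-change check for $1<p<2$ supplies what the paper only asserts. The one unstated fact in Step 3, namely $3-2\delta_p\geq 2$, follows from the bound $\delta_K(\epsilon)\leq \epsilon/2$, obtained by taking $y=(1-\epsilon)x$ with $\lVert x\rVert=1$.
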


\subsection{An asymptotic lower bound} 

By the 2D case of Dvoretzky's theorem, any convex body of sufficiently high dimension admits an
approximately Euclidean $2$-dimensional central section (see~\cite[Chapter 5]{AGM15}). An easy
corollary is that Euclidean balls are asymptotic minimizers of $\rho_2(K)$ as $d\rightarrow +\infty$.  

\begin{proposition}
There exists a vanishingly small function $f(d)=o_{d\rightarrow \infty}(1)$ such that for all
symmetric convex bodies $K\subseteq \mathbb{R}^d$ we have $\rho_2(K)\geq \rho_2(B^d)-f(d)$. That is,
\[
\lim_{d\rightarrow \infty} \inf_{K\subseteq \mathbb{R}^d}\rho_2(K)=\sqrt{5+2\sqrt{3}}, 
\] 
where the infimum is over all symmetric convex bodies. 
\end{proposition}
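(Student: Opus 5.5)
The plan has two halves: a lower bound on $\rho_2(K)$ valid for every $K$, and an upper bound on $\inf_K\rho_2(K)$.

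The upper bound is immediate. The infimum over $K\subseteq\mathbb{R}^d$ is at most $\rho_2(B^d)=\sqrt{5+2\sqrt3}$ by Proposition~\ref{prp:rho2_euclidean}. So $\limsup_d \inf_K \rho_2(K)\le \sqrt{5+2\sqrt3}$, and all the work is in the lower bound $\rho_2(K)\geq \rho_2(B^d)-f(d)$.

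For the lower bound, fix $\epsilon>0$. By the $2$-dimensional case of Dvoretzky's theorem, there is $d_0(\epsilon)$ such that for every symmetric convex body $K\subseteq\mathbb{R}^d$ with $d\geq d_0(\epsilon)$ we can find:
\begin{itemize}
\item a $2$-dimensional subspace $E$;
\item a Euclidean norm $|\cdot|$ on $E$ (an inner product norm);
\item a scale $\lambda>0$ with $\lambda|v|\leq \lVert v\rVert_K\leq (1+\epsilon)\lambda|v|$ for all $v\in E$.
\end{itemize}
Since $\rho_2$ is defined by the scale-invariant max--min \eqref{eq:rho2}, we may rescale so that $\lambda=1$. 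Set $f(d)$ to be the smallest $\epsilon$ admissible in dimension $d$; then $f(d)\to 0$.

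Next we transplant the extremal planar configuration of Figure~\ref{fig:rho2_euclidean} (with $\delta=1$) into $E$, using the Euclidean structure $|\cdot|$. This gives points $x,y_1,y_2\in E$ with
\[
|x|=1,\qquad |y_i|=2,\qquad |y_1-y_2|=2,\qquad |x-y_i|=\sqrt{5+2\sqrt3}.
\]
These points need not satisfy \eqref{eq:rho2_inequalities} for $\lVert\cdot\rVert_K$, so we adjust them:
\begin{itemize}
\item replace $x$ by $x'=x/(1+\epsilon)$ and $y_i$ by $y_i'=y_i/(1+\epsilon)$, which guarantees $\lVert x'\rVert_K\leq 1$ and $\lVert y_i'\rVert_K\leq 2$;
\item observe that $\lVert y_1'-y_2'\rVert_K\geq |y_1'-y_2'|=2/(1+\epsilon)$.
\end{itemize}
This misses the separation condition $\lVert y_1-y_2\rVert_K\geq 2$ by a factor $(1+\epsilon)^{-1}$. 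There are two ways to close this gap.
\begin{enumerate}
\item Use $\rho_2(K,\delta)$ with $\delta=(1+\epsilon)^{-1}$, together with a continuity or scaling comparison between $\rho_2(K,\delta)$ and $\rho_2(K,1)$.
\item Directly spread $y_1,y_2$ apart along the segment direction by a factor $1+\epsilon$ while shrinking them by $(1+\epsilon)^{-2}$, so that all constraints hold exactly.
\end{enumerate}
I would take the second route to avoid needing a continuity lemma for general $K$. Its cost is that the Euclidean configuration is perturbed by $O(\epsilon)$: we use the explicit formula $\sqrt{5+2\sqrt{4-\delta^2}}$ with $\delta$ slightly above $1$ and with norms slightly below $1$ and $2$, which is continuous in these parameters.

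With the constraints \eqref{eq:rho2_inequalities} satisfied, we conclude as follows:
\[
\rho_2(K)\geq \min_i \lVert x'-y_i'\rVert_K\geq \min_i|x'-y_i'|\geq \sqrt{5+2\sqrt3}-C\epsilon .
\]
The middle inequality holds because $\lVert\cdot\rVert_K$ dominates $|\cdot|$ on $E$. The last holds because the perturbed configuration differs from the extremal one by $O(\epsilon)$. Letting $d\to\infty$ and combining with the upper bound gives the limit.

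The main obstacle is the bookkeeping in the last two steps. We must simultaneously keep $\lVert x\rVert_K\leq 1$ and $\lVert y_i\rVert_K\leq 2$ (which requires shrinking) and $\lVert y_1-y_2\rVert_K\geq 2$ (which requires expanding), using only two-sided comparability on $E$. If the explicit perturbation gets messy, I would instead prove the scaling bound $\rho_2(K,t\delta)\le$ something continuous in $t$, or simply note that in the Euclidean picture in $E$ we can take $|y_1-y_2|=2(1+\epsilon)$ and $|y_i|=2/(1+\epsilon)$ provided $(1+\epsilon)^2\le 2$. Then $p,q$ stay valid and the distance $\min_i|x-y_i|$ depends continuously on $\epsilon$.
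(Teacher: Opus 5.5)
Your proposal is correct and follows the paper's proof. Both arguments take a $2$-dimensional almost-Euclidean section from Dvoretzky's theorem, transplant the extremal planar configuration of Figure~\ref{fig:rho2_euclidean} into it with the Euclidean separation enlarged to absorb the distortion, and then use continuity of $\sqrt{5+2\sqrt{4-\delta^2}}$. The paper normalizes $(1-\epsilon)\lVert v\rVert_2\le\lVert v\rVert_K\le\lVert v\rVert_2$ and takes $\delta=1/(1-\epsilon)$. Your explicit fallback does the same thing, with slightly more separation than needed: in your normalization $|y_1-y_2|\ge 2$ already suffices, together with $|y_i|\le 2/(1+\epsilon)$ and $|x|\le 1/(1+\epsilon)$. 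That fallback settles the vagueness of your "route (2)", and your trivial upper bound via $K=B^d$, which the paper leaves implicit, completes the limit.
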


\begin{proof}
Assume that $K$ is in John position, as we may since the constant $\rho_2(K)$ is
affine-invariant. Given $\epsilon>0$ there exists $d(\epsilon)$ such that any symmetric convex body
of dimension $d\geq d(\epsilon)$ has a central two-dimensional section that is approximately
Euclidean: there exists a $2$-dimensional subspace $H$ of $\mathbb{R}^d$ such that
\[ 
(1-\epsilon)\lVert x\rVert_2\leq \lVert x\rVert_K \leq \lVert x\rVert_2\qquad \forall x\in H. 
\]
Choose $x,y_1,y_2$ satisfying \eqref{eq:rho2_delta_inequalities} with respect to the Euclidean norm
and achieving the maximum in \eqref{eq:rho2_delta} as in Figure \ref{fig:rho2_euclidean}. Then
\[
\lVert x\rVert_K\leq 1 , \quad   
\lVert y_1\rVert_K,  \lVert y_2\rVert_K\leq 2, \quad \lVert y_1-y_2\rVert_K\geq 2\delta(1-\epsilon),
\] and
\[  
\min\{\lVert x-y_1\rVert_K, \lVert x-y_2\rVert_K\}\geq (1-\epsilon)\sqrt{5+2\sqrt{4-\delta^2}}.
\] Choose $\delta=\frac{1}{1-\epsilon}$: then $x,y_1,y_2$ satisfy the inequalities in the definition
of $\rho_2(K)$. Thus,
\[
\rho_2(K)\geq (1-\epsilon)\sqrt{5+2\sqrt{4-(1-\epsilon)^{-2}}}=\sqrt{5+2\sqrt{3}}-O(\epsilon), 
\]
which is the desired conclusion. 
\end{proof}

\section{Algorithmic version}

Computing a maximum independent set in a finite collection of balls in $\RR^d$ is known to be
$\NP$-complete~\cite{GJ79}, even for $d=2$ and even if the balls are congruent. Here we show how
to bypass this difficulty using the polynomial time algorithm outlined below.

We say that an independent set $Y \subseteq X$  is \emph{stable with respect to one-for-two exchanges}
if $|Y|$ cannot increase by the removal of one of its elements and inclusion of two new elements of $X$,
while $Y$ remains an independent set.

  Let $|X|=n$.  For the algorithm, instead of having $Y$ as a maximum independent set, we relax this
  requirement to: Choose $Y \subseteq X$ as a subset subject to
  \[ |y -y'|>2 \qquad (y \neq y', y,y' \in Y), \]
  and such that $Y$ is (i)~\emph{maximal with respect to inclusion} and 
  (ii)~stable with respect to one-for-two exchanges.
  Note that the relaxed definition still works in the earlier argument for singly dominated centers.
  That is, if $Y$ satisfies these requirements, the set $A_y$ has diameter at most $4$. 

  The above requirements (i) and (ii) can be reached in time that is polynomial in $n$.
  Initially, $Y:= \emptyset$, and there are two mechanisms used for augmentation that preserve
  $Y$ as an idependent set: add one element of $X \setminus Y$ or add two elements of $X \setminus Y$
  and remove one element of $Y$.

  The existence of a new element that can be added to $Y$  can be tested in $O(n^2)$ time:
  there are $n$ elements and any given element can be tested in $O(n)$ time with respect to
  its intersection pattern with the current set $Y$. 
  The existence of a one-for-two exchange can be tested in $O(n^3)$ time: there are
  ${n \choose 2}$ pairs of elements and any given pair can be tested in $O(n)$ time with respect to
  its intersection pattern with the current set $Y$. 
  Since the cardinality of $Y$ strictly increases by one after each augmentation,
  there are at most $n$ such augmentations possible.
  The final set $Y$ satisfying
  \begin{equation} \label{eq:rho}
    \Big| \bigcup_{y \in Y} B(y) \Big| \geq \frac{1}{2^d + \rho_2^d} \Big| \bigcup_{x \in X} B(x) \Big|,
  \end{equation}
is output by the algorithm.

\section{Concluding Remarks}\label{sec:concluding}

\begin{enumerate}
	\item There are now two very different sources of exponentially improved lower bounds on
          $f(K)$. If $K$ is a cube, or more generally the Voronoi cell of a lattice (an observation
          of Rado~\cite{R49}, see also~\cite[Section 4]{DD26}), then $f(K)=2^{-d}$ and the
          exponential gap between upper and lower bounds is in fact closed; if $K$ is sufficiently
          convex in the precise sense that $\delta_K(1/2)\geq c_0>0$, then we showed above that the
          exponential gap can be narrowed. The two proofs exploit opposite features of
          $K$: a tiling property in the first case, and strict convexity in the second. 
	\item The proof scheme of Section \ref{sec:general} lends itself to a natural refinement:
          decompose the set of centers as
          $X=X_1\cup X_2\cup X_{\geq 3}$, where \[ X_2:= \{ x \in X \ \colon \ \# N(x)=2\}, \qquad
	X_{\geq 3}:= \{ x \in X \ \colon \ \# N(x) \geq 3\}, \]
	or $X=X_1\cup X_2\cup X_3\cup X_{\geq 4}$ (with obvious interpretation of the notation),
        etc. One could define a constant $\rho_m(K)$ for every $m\geq 2$, as the minimal dilation
        factor $r$ with the property that whenever $m$ interior-disjoint translates of $K$ meet
        another such translate, then the latter is contained in the union of the $r$-dilates of the
        former. This should lead to an improved estimate for the contribution of $X_{\geq m}$ to the
        total volume, via an appropriate analogue of Proposition \ref{prp:rho2_vs_convexity}. The
        most difficult part of such an analysis is the control of the additional sets $X_2$, $X_3$
        etc. Is there an analogue of the isodiametric inequality that could take care of that?
        Candidate conjectural statements of this kind can be found in~\cite{DHK+21}, but we did not
        investigate this matter.  
	\item An LLM, prompted by the first author, succeeded in executing the plan in the point
          above for $m=3$. A write-up clarifying the argument and tracing possible antecedents in
          the literature is in preparation.  
	\item In some sense, closing the remaining exponential gap
	\[ 2.910^{-d} \leq f(B^d) \leq 2.447^{-d}, \]
	in the ball packing problem for a given instance (\ie, collection of unit balls) in high dimensions
	remains as challenging as it was $80$ years ago. 
	\end{enumerate}

\end{document}